\documentclass[conference]{IEEEtran}
\IEEEoverridecommandlockouts

\usepackage{amsmath,amssymb,amsthm}
\usepackage{mathrsfs}
\usepackage{bm}
\usepackage{comment}
\usepackage{graphicx}
\usepackage{subcaption}

\newtheorem{lemma}{Lemma}
\newtheorem{proposition}{Proposition}
\newtheorem{corollary}{Corollary}
\theoremstyle{definition}
\newtheorem{definition}{Definition}
\theoremstyle{remark}
\newtheorem{remark}{Remark}

\DeclareMathOperator{\Tr}{Tr}
\DeclareMathOperator{\conv}{conv}
\DeclareMathOperator{\proj}{proj}

\begin{document}

\title{Copositive Characterizations of Convex Hull Pricing}

\renewcommand{\IEEEauthorrefmark}[1]
{\textsuperscript{#1}}
\author{
\IEEEauthorblockN{Madhusudan Ghosh\textsuperscript{1},
Antoine Lesage-Landry\textsuperscript{2},
and Joshua Adam Taylor\textsuperscript{1}}
\IEEEauthorblockA{\textsuperscript{1}Department of Electrical and Computer Engineering,
New Jersey Institute of Technology, Newark, NJ, USA\\
\textsuperscript{2}Department of Electrical Engineering,
Polytechnique Montreal, GERAD, and Mila, Montreal, QC, Canada\\
Email: mg2262@njit.edu, antoine.lesage-landry@polymtl.ca, jat94@njit.edu}

\thanks{This work was supported by the National Science Foundation under Grant No.~2422849, and the Natural Sciences and Engineering Research Council of Canada (NSERC) through the Alliance grant ALLRP~590233-23.}
}

\maketitle

\begin{abstract}
Due to the nonconvex binary constraints of unit commitment (UC), no uniform linear pricing scheme supports the optimal dispatch. Convex hull pricing (CHP) and copositive duality pricing (CDP) both address this problem. CHP derives the price from the subgradient of the value function of the convex hull relaxation of UC. CDP refers to several different pricing mechanisms that can be constructed from the dual multipliers of the completely positive programming reformulation. In this work, we define a centralized convex hull price over the joint feasible set of UC and prove that, under non-degeneracy, it coincides with the marginal copositive duality price. Numerical experiments on the Scarf example validate this equivalence and quantify the pricing gap introduced by the semidefinite restriction.\\
\end{abstract}

\begin{IEEEkeywords}
Unit commitment, convex hull pricing, copositive programming,
electricity markets, semidefinite relaxation
\end{IEEEkeywords}

%======================================================================
\section{Introduction}\label{sec:intro}
%======================================================================
Wholesale electricity markets schedule generation by solving a unit commitment (UC) problem. The commitment variables are binary, which makes UC a mixed-integer linear program (MILP) with a non-convex value function~\cite{carrion_arroyo_2006}. A uniform set of energy prices therefore cannot, in general, both efficiently clear the market and leave every generator revenue adequate~\cite{liberopoulos_critical_2016}. Operators close this gap with side payments, called uplift. Uplift covers a
generator's cost not recovered by its market revenue.

Convex hull pricing (CHP), introduced by Hogan and Ring~\cite{hogan_minimum-uplift} and formalized by Gribik, Hogan, and Pope~\cite{gribik_market_clearing}, sets a uniform price that minimizes the total uplift payment. The price is a subgradient of the UC value function, evaluated over its convex hull relaxation. The convex hull relaxation is typically taken over the individual generator feasible sets. This admits a decentralized pricing interpretation and leads to somewhat lighter computations~\cite{hua_convex_2017}.

In a separate line of work, Guo, Bodur, and Taylor~\cite{guo_copositive_2025} introduced copositive duality pricing (CDP). The UC problem is lifted into a completely positive program (CPP) using the reformulation of Burer~\cite{burer_copositive_2009}. The pricing mechanisms are constructed from the dual multipliers of the associated copositive program (COP).

\begin{comment}
The two prices are computed by different means. The convex hull price is a subgradient of the UC value function, while the copositive duality price comes from the multipliers of a conic dual of the same problem. In practice, they yield nearly identical prices. Whether, and under what conditions, the two prices should coincide has remained an open question.
\end{comment}

This paper connects CHP and CDP. We define a centralized convex hull price over the joint feasible set of UC, rather than the per-generator convexification used in the literature. We prove that this price and the copositive duality price are subgradients of a common value function, and that they agree wherever that function is differentiable. At commitment transitions, where the value function is nondifferentiable the two prices may differ, though they share a common subgradient. We further show that the copositive duality price decomposes into two components, one linear in demand and one from the lift. Then we give a copositive formulation of the standard decentralized price, together with a more tractable semidefinite programming (SDP) restriction. Numerical experiments on Scarf's example confirm the equivalence and measure the gap left by the restriction.

The remainder of the paper is organized as follows. Section~\ref{sec:formulations} defines the UC problem and its value function. Section~\ref{sec:chp} develops the convex hull price. Section~\ref{sec:cdp} presents the CPP reformulation and its copositive dual. Section~\ref{sec:equiv} establishes the equivalence of the two prices. Section~\ref{sec:sdp} gives the semidefinite restriction, and Section~\ref{sec:cdp-d} the copositive formulation of the decentralized convex hull price. Section~\ref{sec:numerics} reports numerical results.

%======================================================================
\section{Unit Commitment}\label{sec:formulations}
%======================================================================

We index generators by~$g \in \mathcal{G} \subset \mathbb{N}$ and time periods by~$t \in \mathcal{T} = \{1, \ldots, T\}$, where $T \in \mathbb{N}$ is the time horizon. Each generator has an individual feasible set $\mathcal{X}_g \subset \mathbb{R}^{n_g}$ whose variables are the production levels~$p_g \in \mathbb{R}_+^T$, the binary commitment indicators~$z_g \in \{0,1\}^T$, the startup indicators~$u_g$, and any slacks needed to write capacity limits, minimum up-time and down-time requirements, and startup logic~\cite{Taylor_2015}.

Let $x^{\top} = (u^{\top}, z^{\top}, p^{\top}, \ldots)$. We write the
generator feasible sets explicitly as
$$ \mathcal{X}_g = \bigl\{ x \in \mathbb{R}^{n_g}_+ : a^{\top}_{j,g,t} x = b_{j,g,t}\;\forall j, t;\; z_{g,t} \in \{0,1\}\;\forall t \bigr\}, $$
\noindent where $a_{j,g,t}$ and $b_{j,g,t}$ contain the data of generator $g$'s linear operational constraints.~\cite{carrion_arroyo_2006}
% Inequality constraints are cast as equalities through nonnegative slack variables, consistent with the standard form $x \in \mathbb{R}^{n_g}_+$.

Let the vector~$h_t \in \mathbb{R}^n$ be such that $h_t^\top x = \sum_{g \in \mathcal{G}} p_{g,t}$.
We represent the demand balance in period $t \in \mathcal{T}$ as the hyperplane
\begin{equation*}\label{eq:demand-hyperplane}
\mathcal{H}_t := \Bigl\{ x : \sum_{g \in \mathcal{G}} p_{g,t} = d_t \Bigr\},
\end{equation*}
\noindent where $d_t$ is the total grid demand at time $t$.

Let $\mathcal{X}_{\mathcal{G}} := \prod_{g \in \mathcal{G}} \mathcal{X}_g$ denote the joint generator constraints. The feasible set of the UC problem is
\begin{equation*}\label{eq:joint-feasible}
\mathcal{X}_{\mathrm{UC}} := \mathcal{X}_{\mathcal{G}} \cap \Bigl( \bigcap_{t \in \mathcal{T}} \mathcal{H}_t \Bigr).
\end{equation*}

Define the cost function as,
\[
c^\top x = \sum_{g \in \mathcal{G}} \sum_{t \in \mathcal{T}}
\bigl( c^{\text{p}}_g\, p_{g,t} + c^{\text{u}}_g\, u_{g,t} \bigr),
\]
where $c^{\text{p}}_g$ and $c^{\text{u}}_g$ are the marginal production cost and the startup cost of generator~$g$, respectively. UC can be written as the following MILP
\begin{subequations}\label{eq:UC_MILP}
\begin{align}
\textrm{UC}^{\textup{MILP}}: \;\; \min_x \;\; & c^\top x \label{eq:1a} \\
\text{s.t.} \;\; & x \in \mathcal{H}_t, && \forall t \label{eq:1b} \\
& x \in \mathcal{X}_{\mathcal{G}}. \label{eq:1c}
\end{align}
\end{subequations}
Only the demand balance~\eqref{eq:1b} links one generator to another.

\begin{definition}[Value function]\label{def:v}
The value function $v$ assigns to each demand vector the optimal cost of~\eqref{eq:UC_MILP}:
\begin{equation*}\label{eq:v-def}
v(d) := \min \bigl\{ c^\top x : x \in \mathcal{X}_{\mathrm{UC}}(d) \bigr\}.
\end{equation*}
\end{definition}
\noindent Here, $v$ is piecewise linear and may be discontinuous at demand levels where a generator with positive startup cost must be brought online.

%======================================================================
\section{Convex Hull Pricing}\label{sec:chp}
%======================================================================

Let $\conv(\cdot)$ denote the convex hull operator. We denote the optimal value of the centralized convex hull relaxation by
\begin{equation*}\label{eq:vch-equals-vmilp}
v_{\textup{CHP}}(d) := \min \bigl\{ c^\top x : x \in \conv(\mathcal{X}_{\mathrm{UC}}(d)) \bigr\}.
\end{equation*}
As unit commitment is an MILP, $v_{\textup{CHP}}(d) = v(d)$.

\begin{definition}[Centralized convex hull price]\label{def:chp-subgrad}
At demand $d$, the centralized convex hull price is any subgradient of
the value function,
\begin{equation*}\label{eq:chp-subgrad}
\lambda_{\textup{CHP}} \in \partial v(d).
\end{equation*}
\end{definition}
\noindent When $v$ is differentiable at $d$, Definition~\ref{def:chp-subgrad}
reduces to the gradient $\nabla v(d)$,
in which case the price is unique.

We observe that
\begin{equation}\label{eq:inclusion}
\begin{aligned}
\conv(\mathcal{X}_{\mathrm{UC}}) 
&= \conv\Bigl( \mathcal{X}_{\mathcal{G}} \cap \bigcap_{t \in \mathcal{T}} \mathcal{H}_t \Bigr) \\
&\subseteq \prod_{g \in \mathcal{G}} \conv(\mathcal{X}_g) \cap \bigcap_{t \in \mathcal{T}} \mathcal{H}_t.
\end{aligned}
\end{equation}

The price $\lambda_{\textup{CHP}}$ is hard to express in terms of dual variables because it is not clear how $d$ affects the constraints that make up $\conv(\mathcal{X}_{\mathrm{UC}})$. The above inclusion~\eqref{eq:inclusion} motivates the following approximation of convex hull price $\lambda_{\textup{CHP}}$, which is standard in the literature \cite{hogan_minimum-uplift, gribik_market_clearing}.

Consider the LP
\begin{subequations}\label{eq:UC_CHPD}
\begin{align}
\textrm{UC}^{\textup{CHP-D}}: \;\; \min_{x} \;\;
& c^\top x \label{eq:chpd_obj} \\
\text{s.t.} \;\;
& \textstyle\sum_{g} p_{g,t} = d_t, && \forall t \label{eq:chpd_demand} \\
& x \in \textstyle\prod_{g \in \mathcal{G}} \conv(\mathcal{X}_g). \label{eq:chpd_conv}
\end{align}
\end{subequations}
This has the same objective as $\textrm{UC}^{\textup{MILP}}$. The demand, $d_t$, only appears in~\eqref{eq:chpd_demand}.

\begin{definition}[Decentralized convex hull price]\label{def:chpd}
The decentralized convex hull price $\lambda_{\textup{CHP-D}}$  is the optimal dual variable of the demand balance constraint in $\textrm{UC}^{\textup{CHP-D}}$, with each generator feasible set $\mathcal{X}_g$ convexified separately.
\end{definition}

%======================================================================
\section{Copositive Duality}\label{sec:cdp}
%======================================================================

Now we use copositive duality to explicitly state the centralized convex hull price, $\lambda_{\textup{CHP}}$. The completely positive cone $\mathcal{C}_n^*$ is generated by the nonnegative rank-one matrices:
\begin{equation*}\label{eq:cp_cone}
\mathcal{C}_n^* = \Bigl\{ \sum_k z_k z_k^\top : z_k \in \mathbb{R}^n_+ \Bigr\}.
\end{equation*}
Its dual, the copositive cone $\mathcal{C}_n$, contains the symmetric matrices whose
quadratic form is nonnegative on the nonnegative orthant:
\begin{equation*}\label{eq:copositive_cone}
\mathcal{C}_n = \{ M \in \mathbb{S}^n : y^\top M y \ge 0 \;\; \forall\, y \in \mathbb{R}^n_+ \},
\end{equation*}
where $\mathbb{S}^n$ is the cone of $n \times n$ symmetric matrices.

Burer~\cite{burer_copositive_2009} showed that any MILP can be represented as a CPP whose feasible set projects onto the convex hull of the MILP's feasible set. \\

Let $Y = \begin{pmatrix} 1 & x^\top \\ x & X \end{pmatrix} \in \mathbb{S}^{n+1}$ be the lifted matrix variable, with $X \in \mathbb{S}^n$ corresponding to  the outer product $x x^\top$. Following~\cite{guo_copositive_2025}, we write the completely positive
program $\mathrm{UC}^{\textup{CPP}}$ as
\begin{subequations}\label{eq:UC_CPP}
\begin{align}
\textrm{UC}^{\textup{CPP}}: \;\; \min_{Y} \;\;
& c^\top x \label{eq:2a} \\
\text{s.t.} \;\;
& \textstyle\sum_g p_{g,t} = d_t, && \forall t \label{eq:2b} \\
& a^\top_{j,g,t} x = b_{j,g,t}, && \forall j, g, t \label{eq:2c} \\
& \Tr(h_t h_t^\top X) = d_t^2, && \forall t \label{eq:2d} \\
& \Tr(a_{j,g,t} a_{j,g,t}^\top X) = b_{j,g,t}^2, && \forall j, g, t \label{eq:2e} \\
& z_{g,t} = Z_{g,t}, && \forall g, t \label{eq:2f} \\
& Y \in \mathcal{C}^*_{n+1}. \label{eq:2g}
\end{align}
\end{subequations}
\noindent Constraints~\eqref{eq:2d}--\eqref{eq:2e} are the squared counterparts of the linear equalities in~\eqref{eq:2b}--\eqref{eq:2c}, obtained by the reformulation-linearization technique (RLT), and~\eqref{eq:2f} ties each binary variable to its own diagonal entry. We write $\mathcal{B} = \mathcal{G} \times \mathcal{T}$ for the index set of these binary lifts. By~\cite[Thm.~2.6]{burer_copositive_2009}, the lifted program has the same optimal value as~\eqref{eq:UC_MILP} under the key assumption that the linear constraints implicitly bound the binary variables in $[0,1]$. $\textrm{UC}^{\textup{MILP}}$ satisfies this because the capacity and startup logic in $\mathcal{X}_g$ inherently enforce $z_{g,t} \le 1$ without adding any separate inequality constraint.

The Lagrangian of $\textrm{UC}^{\textup{CPP}}$ is
\begin{equation}\label{eq:cpp-lagrangian}
\begin{aligned}
L(x, X, \alpha; d) =\;& c^\top x
+ \sum_t \lambda_t \bigl( d_t - \textstyle\sum_g p_{g,t} \bigr) \\
& + \sum_{j,g,t} \phi_{j,g,t} \bigl( b_{j,g,t} - a^\top_{j,g,t} x \bigr) \\
& + \sum_t \Lambda_t \bigl( d_t^2 - \operatorname{Tr}(h_t h_t^\top X) \bigr) \\
& + \sum_{j,g,t} \Phi_{j,g,t} \bigl( b_{j,g,t}^2 - \operatorname{Tr}(a_{j,g,t} a_{j,g,t}^\top X) \bigr) \\
& + \sum_{k \in \mathcal{B}} \delta_k (Z_k - z_k) - \Tr(\Omega Y),
\end{aligned}
\end{equation}
where $\alpha = (\lambda, \Lambda, \phi, \Phi, \delta, \Omega)$ collects all dual variables: $\lambda_t, \Lambda_t, \phi_{j,g,t}, \Phi_{j,g,t}, \delta_k$ for the linear constraints~\eqref{eq:2b}--\eqref{eq:2f}, and the symmetric matrix $\Omega \in \mathbb{S}^{n+1}$ for the conic constraint~\eqref{eq:2g}. Because $x$ and $X$ are blocks of $Y$, each term of~\eqref{eq:cpp-lagrangian} is either independent of $Y$ or linear in $Y$. The $Y$-independent terms form
\[
Q(\alpha; d) := \sum_t \bigl( d_t \lambda_t + d_t^2 \Lambda_t \bigr)
+ \sum_{j,g,t} \bigl( b_{j,g,t} \phi_{j,g,t} + b_{j,g,t}^2 \Phi_{j,g,t} \bigr).
\]
The remaining terms collect into the trace inner product $\Tr(M(\alpha)Y)$ for some symmetric matrix $M(\alpha) \in \mathbb{S}^{n+1}$. This decomposes the Lagrangian as
\begin{equation}\label{eq:L-decomp}
L(x, X, \alpha; d) = Q(\alpha; d) + \Tr(M(\alpha) Y).
\end{equation}
Following conic duality, the infimum of $L$ over $Y \in \mathcal{C}^*_{n+1}$ equals $Q(\alpha; d)$ if $M(\alpha) \in \mathcal{C}_{n+1}$, and is unbounded below otherwise. The dual problem of $\textrm{UC}^{\textup{CPP}}$ is
\begin{subequations}\label{eq:uc_cop}
\begin{alignat}{2}
\textrm{UC}_{d}^{\textup{COP}}: \;\;
& \max_{\lambda,\Lambda,\phi,\Phi,\delta} \;\; && Q(\alpha; d) \label{eq:uc_cop_obj} \\
& \quad \;\; \text{s.t.} \;\; && M(\alpha) \in \mathcal{C}_{n+1}. \label{eq:uc_cop_cone}
\end{alignat}
\end{subequations}
\noindent According to Cifuentes et al.~\cite{cifuentes_sensitivity_2024}, a bounded feasible region guarantees a strictly feasible point of the copositive dual, so by Slater's condition $\textrm{UC}^{\textup{CPP}}$ and $\textrm{UC}_{d}^{\textup{COP}}$
have strong duality. We denote the optimal value of the copositive dual by $v_{\textup{CDP}}(d)$. By Burer's
theorem~\cite[Thm.~2.6]{burer_copositive_2009}, this implies
\begin{equation*}\label{eq:strong-duality}
v_{\textup{CDP}}(d) = v(d).
\end{equation*}
Specializing~\cite[Cor.~2.4]{burer_copositive_2009} to UC we have,
\begin{equation*}\label{eq:burer-projection}
\proj_x\!\bigl( \mathrm{Feas}(\textrm{UC}^{\textup{CPP}}) \bigr)
= \conv(\mathcal{X}_{\mathrm{UC}}),
\end{equation*}
where $\proj_x$ denotes projection onto the $x$ coordinates and $\mathrm{Feas}(\cdot)$ denotes the feasible set of an optimization problem.

%======================================================================
\section{Centralized CHP}\label{sec:equiv}
%======================================================================
We now characterize the exact algebraic relationship between CHP and CDP.

\begin{lemma}\label{lem:subdiff}
The CHP and CDP value functions equal the UC value function, for all $d$,
\begin{equation*}
v_{\textup{CHP}}(d) = v_{\textup{CDP}}(d) = v(d).
\end{equation*}
In particular, at each $d$ there is a common subgradient
$g \in \partial v_{\textup{CHP}}(d) \cap \partial v_{\textup{CDP}}(d) \cap \partial v(d)$.
\end{lemma}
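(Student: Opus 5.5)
The plan is to prove the two equalities pointwise in $d$, one for each value function, and then obtain the common subgradient for free. Since the three functions coincide as functions of $d$, their subdifferentials at any $d$ are the same set, so the intersection reduces to $\partial v(d)$ itself.

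\emph{First equality, $v_{\textup{CHP}}(d)=v(d)$.} I would use the fact that a linear objective attains its minimum over $\conv(S)$ at a point of $S$. For fixed $d$, the set $\mathcal{X}_{\mathrm{UC}}(d)$ is a finite union of polyhedra, one for each fixed binary vector $z$. The paper notes that the constraints bound the variables, so I take each of these polyhedra to be bounded. Then $\conv(\mathcal{X}_{\mathrm{UC}}(d))$ is a polytope whose extreme points lie in $\mathcal{X}_{\mathrm{UC}}(d)$. Any $x\in\conv(\mathcal{X}_{\mathrm{UC}}(d))$ can be written as $x=\sum_i\theta_i x_i$ with $x_i\in\mathcal{X}_{\mathrm{UC}}(d)$, and therefore
\[
c^\top x=\sum_i\theta_i c^\top x_i\ge v(d).
\]
The reverse inequality holds because $\mathcal{X}_{\mathrm{UC}}(d)\subseteq\conv(\mathcal{X}_{\mathrm{UC}}(d))$. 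If $\mathcal{X}_{\mathrm{UC}}(d)$ is empty, both sides equal $+\infty$ by convention.

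\emph{Second equality, $v_{\textup{CDP}}(d)=v(d)$.} I would combine two ingredients. By Burer's projection result, $\proj_x(\mathrm{Feas}(\textrm{UC}^{\textup{CPP}}))=\conv(\mathcal{X}_{\mathrm{UC}})$. The objective of $\textrm{UC}^{\textup{CPP}}$ depends only on the $x$-block of $Y$, so the optimal value of $\textrm{UC}^{\textup{CPP}}$ equals $\min\{c^\top x: x\in\conv(\mathcal{X}_{\mathrm{UC}}(d))\}=v_{\textup{CHP}}(d)$. By the first equality, this is $v(d)$. Strong duality between $\textrm{UC}^{\textup{CPP}}$ and $\textrm{UC}_d^{\textup{COP}}$, which the paper obtains from strict dual feasibility via Cifuentes et al. under boundedness, then gives $v_{\textup{CDP}}(d)=v(d)$.

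\emph{Common subgradient.} Because the three functions are identical, $\partial v_{\textup{CHP}}(d)=\partial v_{\textup{CDP}}(d)=\partial v(d)$, and any element of this set is a common subgradient. The substantive question is whether the set is nonempty. I read $\partial$ in the sense the paper uses for its prices, namely the subdifferential of the convex function $v_{\textup{CHP}}$ viewed through the joint convexification in $(x,d)$. Its elements are exactly the demand-constraint multipliers $\lambda$ of an optimal solution of $\textrm{UC}_d^{\textup{COP}}$. Concavity of $Q$ in $d$, with a supporting affine function given by these optimal multipliers, yields such a $g$.

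The main obstacle is this last step. The literal function $v$ is piecewise linear and may be discontinuous or nonconvex, so its ordinary convex subdifferential can be empty at some $d$. A fully rigorous treatment needs either an interpretation of $\partial$ as a generalized (for example Clarke) subdifferential, or a restriction to $d$ in the interior of the domain where the dual optimum is attained. I would first try the dual-attainment route, using the nonemptiness and boundedness of the optimal dual face implied by the Slater point. Failing that, I would state the claim for the convex envelope of $v$ and note that it coincides with $v$ where $v$ is convex.
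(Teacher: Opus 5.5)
Your proof of the two equalities is the paper's argument. A linear objective over $\conv(\mathcal{X}_{\mathrm{UC}}(d))$ is minimized on $\mathcal{X}_{\mathrm{UC}}(d)$, and Burer's lifting with CPP/COP strong duality carries this to $v_{\textup{CDP}}$. The paper's one-line proof treats the subgradient clause as immediate from these equalities. It goes no further than your remark that identical functions have identical subdifferentials. You are right that the real content is nonemptiness, and that neither proof supplies it. For the convex subdifferential it is in fact false. Take one period, a unit with $c^{\text{p}}=7$, no startup cost and capacity $6$, and a unit with $c^{\text{p}}=2$, $c^{\text{u}}=30$ and capacity $7$. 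Then $v(d)=7d$ on $[0,6]$ and $v(d)=30+2d$ on $[6,7]$. For $d\in(0,6)$ a global subgradient must have slope $7$, yet $v(7)=44<49=v(d)+7(7-d)$, so $\partial v(d)=\emptyset$.

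The repair you sketch does not close this gap, for three reasons.

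\emph{First}, $v_{\textup{CHP}}$ is not convex; you have just shown it equals $v$. Convexifying jointly in $(x,d)$ gives the convex envelope of $v$. Since $d$ is linear in $x$, this is exactly the value function of $\textrm{UC}^{\textup{CHP-D}}$. Its subgradients lie in $\partial v(d)$ only where the envelope touches $v$.

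\emph{Second}, your description of the subgradients as the multipliers $\lambda$ fits $\textrm{UC}_d^{\textup{COP-D}}$, not $\textrm{UC}_d^{\textup{COP}}$. The centralized optimum supports $v$ itself through the quadratic $Q(\alpha^*;\cdot)$, whose slope is $\lambda^*_t+2d_t\Lambda^*_t$. Also, $Q(\alpha;\cdot)$ is not concave, because $\Lambda_t$, the multiplier of \eqref{eq:2d}, is sign-free; concavity would point the wrong way anyway. What does hold, since $M(\alpha)$ does not involve $d$, is that weak duality at every $d'$ gives
\[
v(d') \ge Q(\alpha^*;d') = v(d) + \gamma^\top(d'-d) + \textstyle\sum_t \Lambda^*_t (d'_t-d_t)^2, \qquad \gamma_t=\lambda^*_t+2d_t\Lambda^*_t .
\]
So $\gamma$ is a proximal (Fr\'echet) subgradient of $v$ at $d$, and a convex one if $\Lambda^*\ge 0$. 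This is the mechanism behind Proposition~\ref{prop:energy-price}.

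\emph{Third}, this needs a COP optimum $\alpha^*$, which your ``dual-attainment route'' assumes can be secured. The Slater point the paper cites is strictly feasible for the \emph{dual}. That yields attainment in $\textrm{UC}^{\textup{CPP}}$, not in $\textrm{UC}_d^{\textup{COP}}$. The usual route to COP attainment, a strictly feasible $Y$, is unavailable. Each pair \eqref{eq:2c}, \eqref{eq:2e} gives $w^\top Y w=0$ for $w=(-b_{j,g,t},a_{j,g,t}^\top)^\top$, so $Y$ is singular. Worse, the display shows that no COP optimum can exist at a concave kink of $v$, such as $d=6$ above, since it would force $7\le\gamma\le 2$. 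So at commitment transitions the clause needs a generalized (e.g.\ Clarke or limiting) subdifferential and cannot come from dual multipliers.
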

\begin{proof}
The proof is straightforward as it follows from basic results on MILP and strong duality of CPP and COP.
\end{proof}

By Definition~\ref{def:chp-subgrad}, the convex hull price is a subgradient of~$v$. By Lemma~\ref{lem:subdiff}, $v$ is the value of the copositive dual, so the price can be obtained from that dual. Let $\mathcal{S}^*(d)$ denote the set of saddle-point triplets $(x^*, X^*, \alpha^*)$ of the Lagrangian~\eqref{eq:cpp-lagrangian} at demand~$d$. By strong duality,
\begin{equation*}\label{eq:saddle-equality}
    v(d) = L(x^*, X^*, \alpha^*; d), \quad \text{for all } (x^*, X^*, \alpha^*) \in \mathcal{S}^*(d).
\end{equation*}

\noindent Consequently, the exact centralized convex hull price is characterized by the
subdifferential of the Lagrangian evaluated over this saddle-point set. For each
period $t$,
\begin{equation*}\label{eq:chp-characterization}
    \lambda_{\textup{CHP},t}(d) = \Bigl\{ \tfrac{\partial L}{\partial d_t}(x^*, X^*, \alpha^*; d) : (x^*, X^*, \alpha^*) \in \mathcal{S}^*(d) \Bigr\}.
\end{equation*}

\begin{proposition}\label{prop:energy-price}
For any $(x^*, X^*, \alpha^*) \in \mathcal{S}^*(d)$,
\begin{equation}\label{eq:energy-price}
    \frac{\partial L}{\partial d_t}(x^*, X^*, \alpha^*; d) = \lambda^*_t + 2d_t\Lambda^*_t.
\end{equation}
Hence, if $v$ is differentiable at $d$, then $\frac{\partial v(d)}{\partial d_t} = \lambda^*_t + 2d_t\Lambda^*_t$.
\end{proposition}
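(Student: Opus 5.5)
The first claim is a direct computation on the decomposition \eqref{eq:L-decomp}. I would write $L(x,X,\alpha;d) = Q(\alpha;d) + \Tr(M(\alpha)Y)$ and observe that the matrix $M(\alpha)$ collects only the dual multipliers and the constraint data $h_t$, $a_{j,g,t}$, $c$. It does not depend on $d$, because $d$ enters $\mathrm{UC}^{\textup{CPP}}$ only through the right-hand sides of \eqref{eq:2b} and \eqref{eq:2d}. Hence, with $(x^*,X^*,\alpha^*)$ held fixed, the only $d_t$-dependence of $L$ is through the terms $\lambda_t d_t + \Lambda_t d_t^2$ of $Q$. Differentiating gives $\partial L/\partial d_t = \lambda^*_t + 2 d_t \Lambda^*_t$, which is \eqref{eq:energy-price}.

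For the second claim I would use a sensitivity (envelope) argument based on strong duality.

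\textbf{Upper bound from dual feasibility.} Fix $\alpha^*$ from a saddle point at $d$. The feasible set \eqref{eq:uc_cop_cone} of $\mathrm{UC}_{d'}^{\textup{COP}}$ does not depend on $d'$, so $\alpha^*$ remains dual feasible at any $d'$. Strong duality and Lemma~\ref{lem:subdiff} then give
\begin{equation*}
v(d') = v_{\textup{CDP}}(d') \ge Q(\alpha^*; d') \quad \text{for all } d',
\end{equation*}
with equality at $d'=d$.

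\textbf{Touching minorant.} The function $q(d') := Q(\alpha^*;d')$ is a smooth minorant of $v$ that touches $v$ at $d$. It is separable quadratic in $d'$, with $\partial q/\partial d'_t = \lambda^*_t + 2d'_t\Lambda^*_t$.

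\textbf{Matching gradients.} If $v$ is differentiable at $d$, then $v - q$ is differentiable, nonnegative, and attains its minimum value $0$ at $d$. Its gradient must therefore vanish there, so $\nabla v(d) = \nabla q(d)$, which is the claimed formula. This argument is Fermat's rule applied to $v-q$, and it does not need convexity of $v$.

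The main obstacle is making sure the touching-minorant argument is legitimate. Three points need checking.

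First, strong duality must hold at every $d'$ in a neighbourhood of $d$, not just at $d$. The Slater-type condition from \cite{cifuentes_sensitivity_2024} relies on a bounded feasible region, which persists under small perturbations of demand, so I would simply cite it again.

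Second, the argument must not tacitly assume any convexity of $v$. Only differentiability of $v$ at the single point $d$ is used, so no convexity is required.

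Third, at points where $v$ is discontinuous or kinked, the argument shows only that $\lambda^*_t + 2d_t\Lambda^*_t$ is the gradient of a quadratic minorant touching $v$ at $d$. That is why the proposition restricts the equality to points of differentiability. Consistent with Lemma~\ref{lem:subdiff}, at nondifferentiable points the expression should be read as a (local) subgradient rather than the unique gradient.
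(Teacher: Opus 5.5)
Your proposal is correct. For the second claim it follows a different and more complete route than the paper.

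The paper's proof is just your first paragraph: $d$ enters $L$ only through $d_t\lambda_t + d_t^2\Lambda_t$, so one differentiates at a fixed saddle point. The step from $\partial L/\partial d_t$ to $\partial v/\partial d_t$ is borrowed from the envelope-type characterization of $\lambda_{\textup{CHP},t}(d)$ over $\mathcal{S}^*(d)$ stated just before the proposition. The paper asserts that characterization from strong duality without proof.

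You actually prove that link. Since $M(\alpha)$ is independent of $d$, $\alpha^*$ stays dual feasible at every $d'$. So $Q(\alpha^*;\cdot)$ is a quadratic minorant of $v$ that touches it at $d$. Fermat's rule applied to $v - Q(\alpha^*;\cdot)$ then gives $\partial v(d)/\partial d_t = \lambda^*_t + 2d_t\Lambda^*_t$. This buys three things the paper's argument does not:
\begin{itemize}
\item it uses no convexity of the nonconvex, possibly discontinuous MILP value function;
\item it shows that $\lambda^*_t + 2d_t\Lambda^*_t$ is the same for every saddle point, even when the multipliers themselves are not unique;
\item at commitment transitions it identifies the price as a proximal (hence Fr\'echet) subgradient of $v$, which makes precise what subgradient means for this $v$.
\end{itemize}

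One simplification: you do not need strong duality in a neighbourhood of $d$. For every $d'$, Burer's theorem gives that $v(d')$ equals the optimal value of $\mathrm{UC}^{\textup{CPP}}$ at $d'$. Weak duality then gives that this value is at least $v_{\textup{CDP}}(d') \ge Q(\alpha^*;d')$, trivially so if $d'$ is infeasible. Strong duality is needed only at $d$ itself, for the equality $Q(\alpha^*;d) = v(d)$. That equality is already built into $\alpha^*$ being part of a saddle point.
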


\begin{proof}
Demand $d$ enters the Lagrangian $L$  only through the terms $d_t\lambda_t$ and $d_t^2\Lambda_t$. Differentiating the Lagrangian at a fixed saddle-point triplet $(x^*, X^*, \alpha^*)$ yields~\eqref{eq:energy-price}.
\end{proof} 
The copositive duality price is thus the sum of the balance dual $\lambda^*$ and a lifted term $2 d_t \Lambda^*$ contributed by the RLT load constraint~\eqref{eq:2d}. 

\begin{remark}\label{rem:sdp-parallel}
The same price mechanism is obtained for the semidefinite relaxation by Guo et al.~\cite{guo_pricing_2026}. 
\end{remark}

\begin{remark} \label{rem:marginal-volumetric}
The price in~\eqref{eq:energy-price} is the marginal value $\partial v / \partial d_t$. It differs from the volumetric per-MW form $\lambda^*_t + d_t \Lambda^*_t$ used in the revenue-adequate scheme of Guo, Bodur, and Taylor~\cite{guo_copositive_2025}, where a uniform price recovers total demand-related cost rather than measuring marginal value. The second term's factor of 2 distinguishes the two forms and is what allows the equivalence with convex hull pricing in Corollary~\ref{cor:equiv}.
\end{remark}

\begin{corollary}\label{cor:equiv}
If $v$ is differentiable at $d$, i.e, $\partial v(d)$ is a
singleton, then the centralized convex hull price and the copositive duality price agree:
\begin{equation}\label{eq:price-equal}
\lambda_{\textup{CHP},t}
= \lambda^*_t + 2 d_t \Lambda^*_t.
\end{equation}
\end{corollary}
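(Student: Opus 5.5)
The plan is to obtain the corollary by chaining Definition~\ref{def:chp-subgrad}, Lemma~\ref{lem:subdiff}, and Proposition~\ref{prop:energy-price}, using the singleton hypothesis to force all candidate prices to coincide.

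\emph{Step 1: the convex hull price is the gradient.} Fix $d$ at which $\partial v(d)$ is a singleton, so $\partial v(d)=\{\nabla v(d)\}$. By Definition~\ref{def:chp-subgrad}, any admissible $\lambda_{\textup{CHP}}$ lies in $\partial v(d)$. Hence $\lambda_{\textup{CHP}}=\nabla v(d)$, and in particular $\lambda_{\textup{CHP},t}=\partial v(d)/\partial d_t$ is uniquely determined.

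\emph{Step 2: the copositive price is the same gradient.} By Lemma~\ref{lem:subdiff}, $v_{\textup{CDP}}\equiv v$, so $v_{\textup{CDP}}$ has the same subdifferential at $d$. Take any saddle triplet $(x^*,X^*,\alpha^*)\in\mathcal{S}^*(d)$; such a triplet exists by the strong duality quoted from Cifuentes et al. Proposition~\ref{prop:energy-price} gives $\partial L/\partial d_t=\lambda^*_t+2d_t\Lambda^*_t$, and under differentiability this equals $\partial v(d)/\partial d_t$. Combining with Step~1 yields~\eqref{eq:price-equal}. The conclusion holds for every saddle triplet, so the right-hand side does not depend on which dual optimum is selected, even though $(\lambda^*,\Lambda^*)$ individually need not be unique.

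\emph{Main obstacle.} The delicate step is justifying that $Q(\alpha^*;\cdot)$ differentiated at a fixed $\alpha^*$ is a (super/sub)gradient of $v$. I would argue via a weak-duality envelope. For any dual-feasible $\alpha^*$ and any $d'$, we have $v(d')=v_{\textup{CDP}}(d')\ge Q(\alpha^*;d')$. Equality holds at $d'=d$. Since $Q(\alpha^*;\cdot)$ is a smooth quadratic in $d'$, this means $Q(\alpha^*;\cdot)$ touches $v$ from below at $d$. When $v$ is differentiable at $d$, the function $v-Q(\alpha^*;\cdot)$ attains a minimum at $d$ and is differentiable there, so its gradient vanishes. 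This gives $\nabla v(d)=\nabla_d Q(\alpha^*;d)=\lambda^*+2\,\mathrm{diag}(d)\Lambda^*$, which is exactly the claim of Proposition~\ref{prop:energy-price}. This argument also sidesteps any convexity issue with $v$, since it needs only local differentiability, not a global subgradient inequality.
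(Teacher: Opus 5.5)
Your proposal is correct, and its skeleton is the paper's two-line proof. Definition~\ref{def:chp-subgrad} places $\lambda_{\textup{CHP}}$ in $\partial v(d)$, Lemma~\ref{lem:subdiff} and Proposition~\ref{prop:energy-price} place $\lambda^*+2\,\mathrm{diag}(d)\Lambda^*$ there as well, and the singleton hypothesis identifies them.

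The real difference is how you justify the middle link. The paper obtains it by differentiating $L$ at a fixed saddle triplet. That tacitly invokes an envelope theorem: the primal part $(x^*,X^*)$ is infeasible for $d'\neq d$, and because $v$ is nonconvex it is not automatic that this derivative is a subgradient. Your minorant argument makes the step rigorous. Since $d$ enters the Lagrangian only through $Q$, the dual feasible set $\{\alpha : M(\alpha)\in\mathcal{C}_{n+1}\}$ does not depend on $d$. Weak duality then gives $Q(\alpha^*;d')\le v(d')$ for every $d'$, with equality at $d$. First-order optimality of $v-Q(\alpha^*;\cdot)$ at $d$ then gives $\nabla v(d)=\nabla_d Q(\alpha^*;d)$.

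This route buys three things. It needs only local differentiability and no convexity of $v$. Away from $d$ it uses only the trivial fact that rank-one lifts of UC-feasible points are CPP-feasible, so Lemma~\ref{lem:subdiff} is needed only at $d$ itself. And it shows that every dual optimum yields the same price. You should state the $d$-independence of $M(\alpha)$ explicitly, since it is the crux of the argument.

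One correction concerns your claim that a saddle triplet exists by the strong duality quoted from Cifuentes et al. Strict feasibility of the copositive dual yields zero gap and attainment in the primal CPP, not in the dual. Burer's lift also never has a strictly feasible primal point. For each equality $a^\top x=b$ paired with its RLT square, the vector $(-b,a^\top)^\top$ lies in the kernel of every feasible $Y$. So the existence of $\alpha^*$ does not follow from that result. The corollary already presupposes it by naming $\lambda^*$ and $\Lambda^*$, so treat it as a hypothesis rather than a consequence.
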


\begin{proof}
Lemma~\ref{lem:subdiff} and Proposition~\ref{prop:energy-price} place both prices
in $\partial v(d)$. Differentiability collapses $\partial v(d)$ to the unique
gradient $\nabla v(d)$, and the two prices are identical.
\end{proof}

\begin{remark}\label{rem:degen}
Because $v$ is piecewise linear, it is differentiable except on a measure-zero set. These are the commitment transitions, where two or more integer dispatches share the same cost. At such points the left and right derivatives of $v$ differ, both pricing schemes return one-sided values, and the two may not coincide.
\end{remark}

%======================================================================
\section{Semidefinite Approximation}\label{sec:sdp}
%======================================================================

Murty and Kabadi~\cite{murty_kabadi_1987} showed that testing copositivity is co-NP-complete, so optimizing over $\mathcal{C}_{n+1}$ is intractable. De Klerk and Pasechnik~\cite{deklerk_pasechnik_2002} approximate it from within by the tractable cone $\mathbb{S}^{n+1}_+ + \mathcal{N}_{n+1} \subseteq \mathcal{C}_{n+1}$, where $\mathbb{S}^{n+1}_+ \subset \mathbb{S}^{n+1}$ is the positive semidefinite cone and $\mathcal{N}_{n+1}$ is the cone of entrywise nonnegative symmetric matrices. Replacing $\mathcal{C}_{n+1}$ in~\eqref{eq:uc_cop_cone} with this cone gives the semidefinite restriction
\begin{subequations}\label{eq:uc_sdp}
\begin{align}
\textrm{UC}_{d}^{\textup{SDP}}: \;\;
\max_{\alpha, S, N} \;\; & Q(\alpha; d) \label{eq:uc_sdp_obj} \\
\text{s.t.} \;\; & M(\alpha) = S + N, \label{eq:uc_sdp_cone_sum} \\
& S \succeq 0, \label{eq:uc_sdp_cone_psd} \\
& N \ge 0. \label{eq:uc_sdp_cone_nn}
\end{align}
\end{subequations}
Let $v_{\textup{SDP}}(d)$ denote the optimal value of~\eqref{eq:uc_sdp}. The
restricted cone is smaller, which shrinks the feasible region of the
maximization, which gives 
\begin{equation*}\label{eq:sdp-bound}
v_{\textup{SDP}}(d) \le v(d).
\end{equation*}
Let $(\lambda^*_{\textup{SDP},t}, \Lambda^*_{\textup{SDP},t})$ be the optimal
demand multipliers of~\eqref{eq:uc_sdp}. We define the SDP-restricted CDP price
at demand $d$ as $\lambda^*_{\textup{SDP},t} + 2 d_t \Lambda^*_{\textup{SDP},t}$.
This follows the derivation of~\eqref{eq:energy-price}, with the optimal
multipliers of the semidefinite restriction. It coincides with
$\lambda_{\textup{CHP},t}$ when the gap $v(d) - v_{\textup{SDP}}(d)$ vanishes.

%======================================================================
\section{Decentralized CHP}\label{sec:cdp-d}
%======================================================================

We now re-express the decentralized convex hull price $\lambda_{\text{CHP-D}}$
through copositive duality. Burer's theorem~\cite{burer_copositive_2009} applies to each generator
individually. For each $g$, we have
\begin{equation}\label{eq:burer-per-gen}
\proj_{x_g}\!\bigl( \mathrm{Feas}(\mathcal{X}_g^{\text{CPP}}) \bigr) = \conv(\mathcal{X}_g),
\end{equation}
where $\mathcal{X}_g^{\textup{CPP}}$ lifts generator $g$'s variables $x_g$ to
$Y_g = \bigl(\begin{smallmatrix} 1 & x_g^\top \\ x_g & X_g \end{smallmatrix}\bigr)
\in \mathbb{S}^{n_g+1}$, with $X_g \in \mathbb{S}^{n_g}$ the lift of
$x_g x_g^\top$. We write $\{Y_g\}$ for the collection of these matrices over all
$g \in \mathcal{G}$. This program carries only generator $g$'s own constraints,
the per-generator analogues of~\eqref{eq:2c}, \eqref{eq:2e}, and~\eqref{eq:2f},
together with the cone constraint $Y_g \in \mathcal{C}^*_{n_g+1}$.

Substituting~\eqref{eq:burer-per-gen} into~\eqref{eq:chpd_conv} for all $g \in \mathcal{G}$ gives the decentralized completely positive program
\begin{subequations}\label{eq:UC_CPPD}
\begin{align}
\textrm{UC}^{\textup{CPP-D}}: \;\; \min_{\{Y_g\}} \;\;
& c^\top x \label{eq:cppd_obj} \\
\text{s.t.} \;\;
& \textstyle\sum_g p_{g,t} = d_t, && \forall t \label{eq:cppd_bal} \\
& a^\top_{j,g,t} x = b_{j,g,t}, && \forall j, g, t \label{eq:cppd_op} \\
& \Tr(a_{j,g,t} a_{j,g,t}^\top X_g) = b_{j,g,t}^2, && \forall j, g, t \label{eq:cppd_rlt} \\
& z_{g,t} = Z_{g,t}, && \forall g, t \label{eq:cppd_bin} \\
& Y_g \in \mathcal{C}^*_{n_g+1}, && \forall g. \label{eq:cppd_cone}
\end{align}
\end{subequations}
In place of the single cone constraint $Y \in \mathcal{C}^*_{n+1}$ of~\eqref{eq:UC_CPP}, each generator now carries its own $Y_g \in \mathcal{C}^*_{n_g+1}$. Because the~$Y_g$ share no entries across generators, the demand-RLT~\eqref{eq:2d} cannot be formed, and no multiplier $\Lambda$ appears in the dual.

We dualize~\eqref{eq:UC_CPPD} as in Section~\ref{sec:cdp}. The Lagrangian is linear in each $Y_g$,
so it decomposes as
\begin{equation}\label{eq:L-decomp-D}
L_{\text{D}} = Q_{\text{D}}(\lambda_{\text{D}}, \{\alpha_g\}; d) + \sum_g \Tr(M_g(\lambda_{\text{D}}, \alpha_g)\, Y_g),
\end{equation}
where $\lambda_{\text{D}}$ is the demand-balance multiplier and
$\alpha_g = (\phi_g, \Phi_g, \delta_g, \Omega_g)$ gathers the remaining duals of
generator $g$. The first term of~\eqref{eq:L-decomp-D} defined as,
\[
Q_{\text{D}}(\lambda_{\text{D}}, \{\alpha_g\}; d) = \sum_t d_t \lambda_{{\text{D}},t}
+ \sum_{j,g,t}\bigl(b_{j,g,t}\phi_{j,g,t} + b_{j,g,t}^2 \Phi_{j,g,t}\bigr),
\]
is $Y_g$-independent term, similarly to $Q$  in~\eqref{eq:L-decomp}, now without the $\sum_t d_t^2 \Lambda_t$ term. Each $M_g$ is the coefficient of $Y_g$. The dual is
\begin{subequations}\label{eq:UC_COPD}
\begin{align}
\textrm{UC}_{d}^{\textup{COP-D}}: \;\; \max_{\lambda_{\text{D}},\, \{\alpha_g\}} \;\;
& Q_{\text{D}}(\lambda_{\text{D}}, \{\alpha_g\}; d) \label{eq:copd_obj} \\
\text{s.t.} \;\; & M_g(\lambda_{\text{D}}, \alpha_g) \in \mathcal{C}_{n_g+1},
  && \forall g. \label{eq:copd_cone}
\end{align}
\end{subequations}
The cone constraints~\eqref{eq:copd_cone} separate across generators, coupling only through the shared multiplier $\lambda_{\text{D}}$.

Demand enters~\eqref{eq:UC_COPD} only through $\sum_t d_t \lambda_{{\text{D}},t}$. By the differentiation argument of
Proposition~\ref{prop:energy-price},
\begin{equation*}\label{eq:chpd-price}
\lambda_{\textup{CHP-D},t} = \lambda^*_{{\text{D}},t}.
\end{equation*}
The centralized price~\eqref{eq:price-equal} carried a second term $2 d_t \Lambda^*_t$. That term is absent here, because the lifted demand balance constraint is absent.

Restricting each cone $\mathcal{C}_{n_g+1}$ in~\eqref{eq:copd_cone}, as in Section~\ref{sec:sdp}, gives the decentralized semidefinite program:
\begin{subequations}\label{eq:UC_SDPD}
\begin{alignat}{2}
\textrm{UC}_{d}^{\textup{SDP-D}}: 
& \max_{\lambda_{\text{D}}, \{\alpha_g\}, S_g, N_g}  Q_{\text{D}}(\lambda_{\text{D}}, \{\alpha_g\}; d) \label{eq:sdpd_obj} \\
& \quad \quad \;\; \text{s.t.} \quad \;\;\; M_g(\lambda_{\text{D}}, \alpha_g) = S_g + N_g, \;\; \forall g, \label{eq:sdpd_cone_sum} \\
& \quad \quad \quad \quad \quad \;\; S_g \succeq 0, \quad \forall g, \label{eq:sdpd_cone_psd} \\
& \quad \quad \quad \quad \quad \;\; N_g \ge 0, \quad \forall g. \label{eq:sdpd_cone_nn}
\end{alignat}
\end{subequations}
Let $\lambda^*_{\textup{SDP-D},t}$ be the optimal demand multiplier of~\eqref{eq:UC_SDPD}. The SDP-restricted decentralized CDP price at demand $d$ is $\lambda^*_{\textup{SDP-D},t}$. 

%======================================================================
\section{Numerical Examples}\label{sec:numerics}
%======================================================================

We compare four prices: the decentralized convex hull price $\lambda_{\text{CHP-D}}$, the decentralized copositive duality price $\lambda^*_{\text{D},t}$, the SDP-restricted decentralized CDP price $\lambda^*_{\text{SDP-D},t}$, and the centralized copositive duality price $\lambda^*_t + 2d_t\Lambda^*_t$. By the equivalence of Section~\ref{sec:equiv}, the exact copositive price coincides with the centralized convex hull price $\lambda_{\textup{CHP}}$ where the value function is differentiable. All experiments were implemented in Python using CVXPY, with Gurobi, MOSEK, and
Clarabel as solvers.

We use the modified Scarf's example of Hogan and Ring~\cite{hogan_minimum-uplift}, a standard benchmark for nonconvex-market pricing. It has three generator types, with parameters listed in Table~\ref{tab:scarf}.

\begin{table}[h]
\centering
\caption{Generator types in the modified Scarf example.}
\label{tab:scarf}
\begin{tabular}{lcccc}
\hline
Type & Count & $c^{\text{p}}$ (\$/MWh) & $c^{\text{u}}$ (\$) & Capacity (MW) \\
\hline
Smokestack        & 6 & 3 & 53 & 16 \\
High tech.   & 5 & 2 & 30 & 7 \\
Medium tech. & 5 & 7 & 0  & 6 \\
\hline
\end{tabular}
\end{table}
We vary the demand from $10$ to $160$~MW in increments of $5$~MW and compare the four prices and the make-whole uplift each leaves.  

Let $x_g^*$ be the dispatch assigned to
generator~$g$, with $p_{g,t}^*$ its production in period~$t$. Its cost is $C_g = c_g^\top x_g^*$ and its revenue at the price~$\lambda_t$ is $R_g = \sum_t \lambda_t p_{g,t}^*$. The uplift of generator~$g$ is the make-whole payment it receives when its revenue falls short of its
cost~\cite{guo_uplift},
\begin{equation}\label{eq:uplift}
U_g(\lambda) = \max\bigl( 0,\; C_g - R_g \bigr),
\end{equation}
and the total uplift is $U(\lambda) = \sum_g U_g(\lambda)$.
\begin{figure}[t]
  \centering
  \begin{subfigure}{\columnwidth}
    \centering
    \includegraphics[width=0.95\columnwidth]{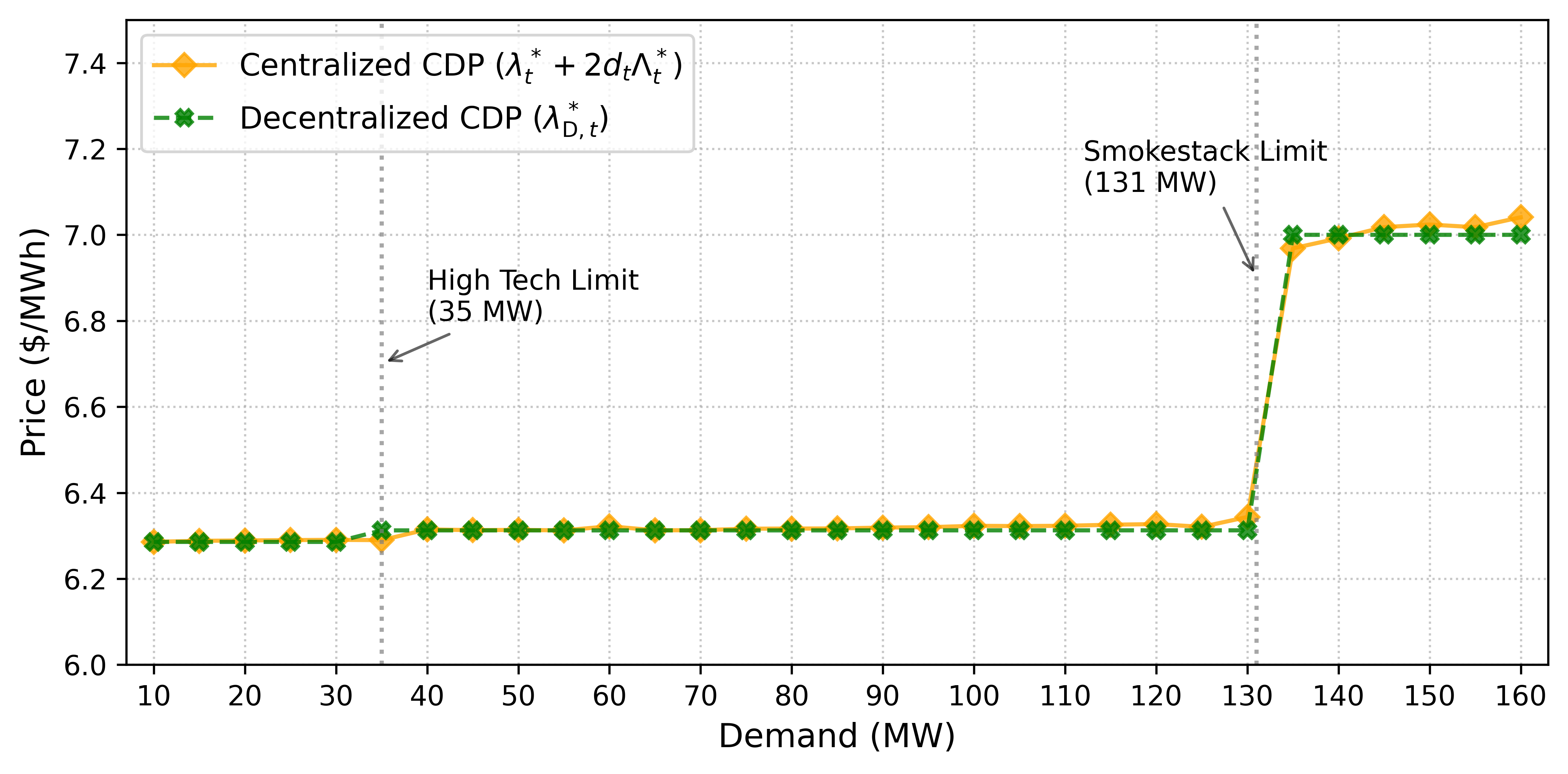}
    \caption{Centralized and decentralized CDP prices.}
    \label{fig:price-a}
  \end{subfigure}
  \\[0.6em]
  \begin{subfigure}{\columnwidth}
    \centering
    \includegraphics[width=0.95\columnwidth]{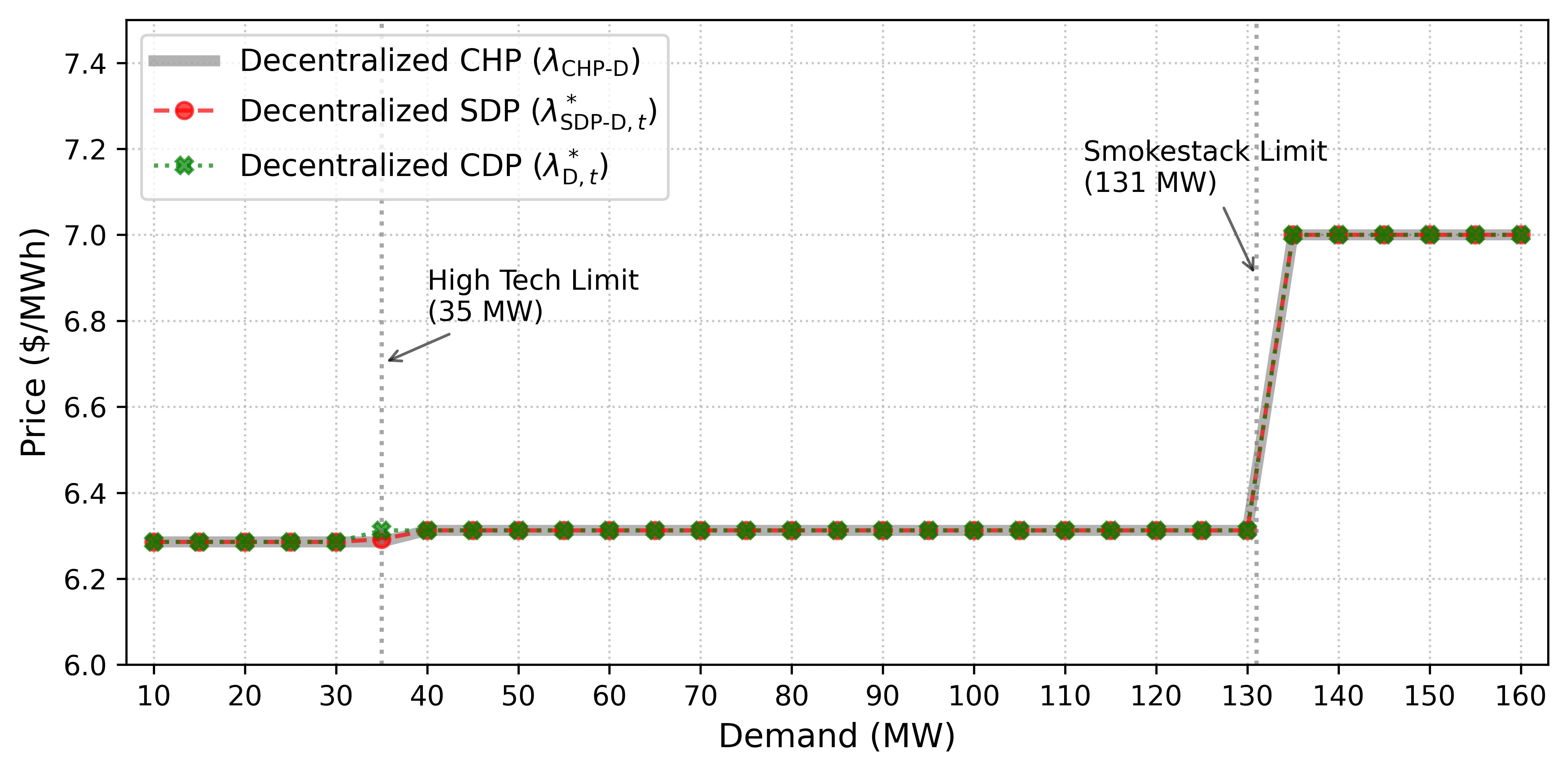}
    \caption{All three decentralized prices.}
    \label{fig:price-b}
  \end{subfigure}
  \caption{Price comparison across demand levels.}
  \label{fig:price}
\end{figure}

\begin{figure}[t]
  \centering
  \begin{subfigure}{\columnwidth}
    \centering
    \includegraphics[width=0.95\columnwidth]{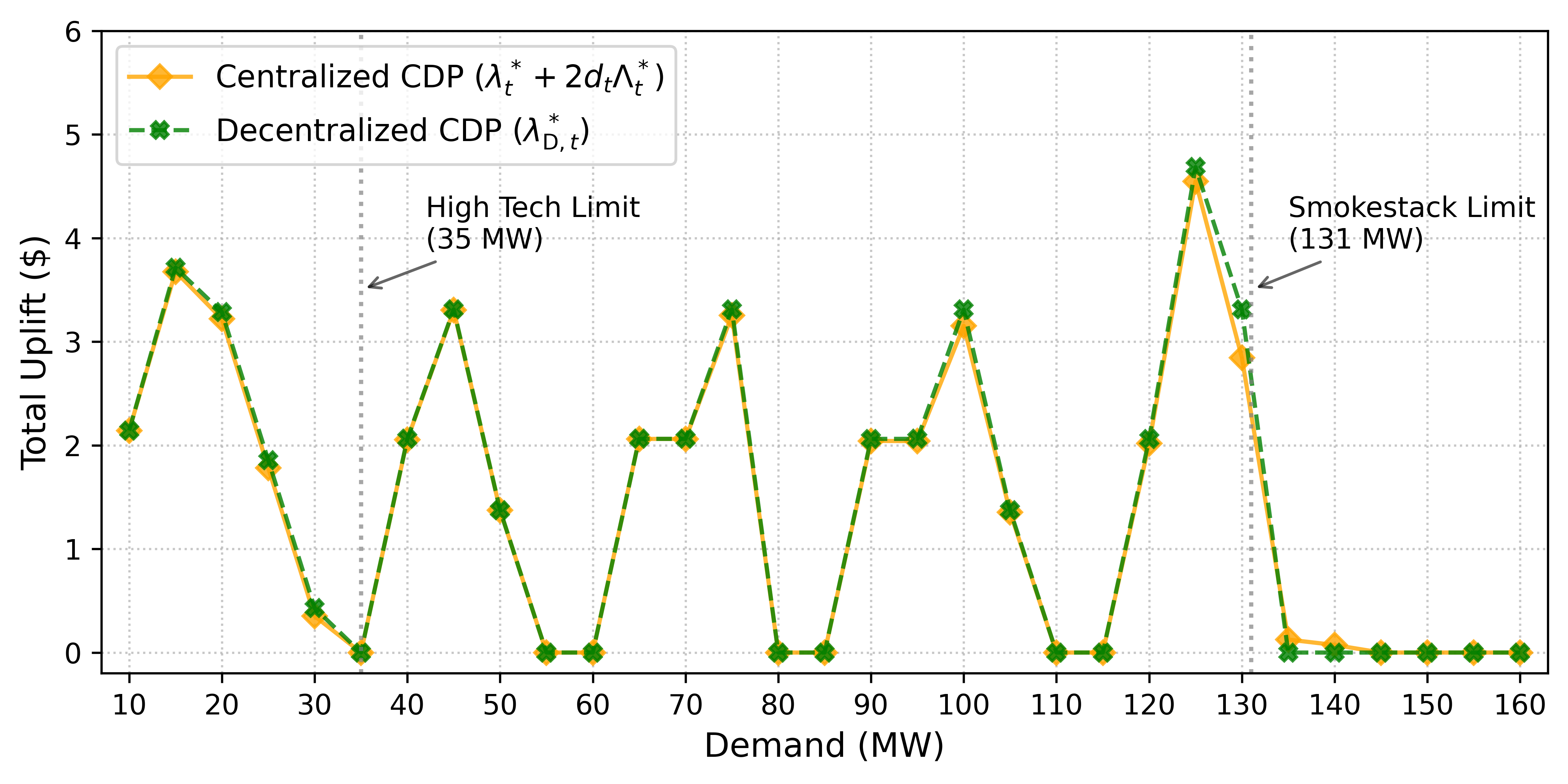}
    \caption{Uplift for centralized and decentralized CDP prices.}
    \label{fig:uplift-a}
  \end{subfigure}
  \\[0.6em]
  \begin{subfigure}{\columnwidth}
    \centering
    \includegraphics[width=0.95\columnwidth]{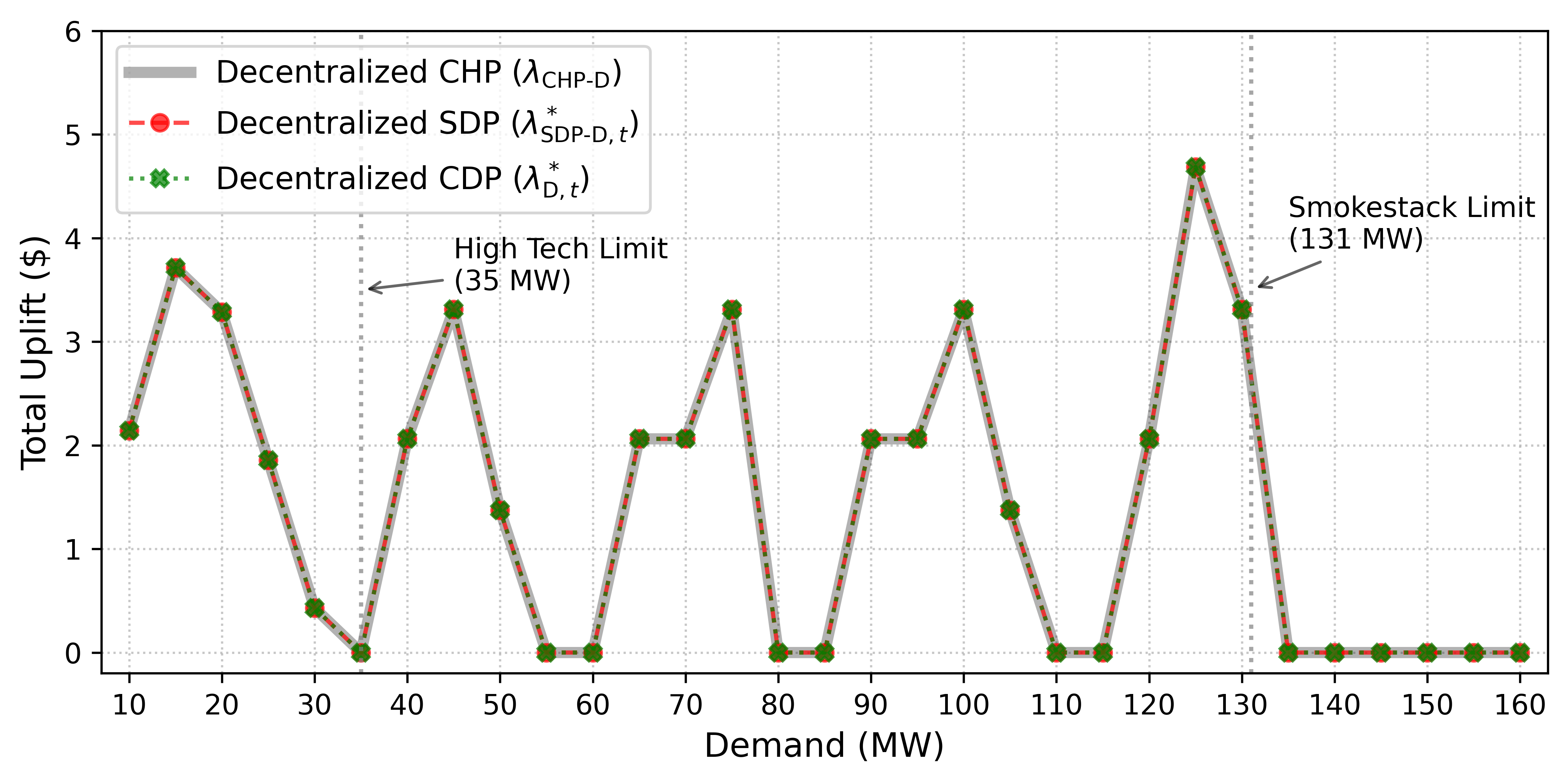}
    \caption{Uplift for all three decentralized prices.}
    \label{fig:uplift-b}
  \end{subfigure}
  \caption{Make-whole uplift across demand levels.}
  \label{fig:uplift}
\end{figure}

Figure~\ref{fig:price}(a) compares the two copositive duality prices. At higher
demand levels the centralized price is slightly higher than the decentralized
price. Figure~\ref{fig:price}(b) shows that the three decentralized prices
coincide across all demand levels, except for the transition at $35$~MW, where
the subgradient is not a singleton.

Figure~\ref{fig:uplift}(a) compares uplifts under centralized and decentralized copositive duality pricing. The decentralized price results in more uplift than the centralized price. However, this does not contradict the uplift-minimizing property of decentralized convex hull pricing. The decentralized convex hull price minimizes uplift only over uniform linear prices. The centralized price is nonlinear, because $\Lambda^*_t$ is the multiplier of a squared constraint. Under the lost-opportunity-cost definition of uplift~\cite{gribik_market_clearing,guo_uplift}, the standard decentralized convex hull price results in less uplift than the centralized price at every demand level, as the theory predicts (not shown here). Figure~\ref{fig:uplift}(b) shows that the uplifts of the three decentralized prices coincide, as their prices do.

%======================================================================
\section{Conclusion}\label{sec:conclusion}
%======================================================================

We have characterized the relationship between convex hull pricing and copositive duality pricing for unit commitment. Defining the centralized convex hull price over the full feasible set, we showed that it and the copositive duality price are subgradients of a common value function and coincide wherever that function is differentiable. The copositive price includes a lifted term beyond the demand price, which is not present in standard decentralized convex hull pricing. We gave the copositive formulation of that decentralized price, and a more tractable semidefinite restriction. Numerical experiments on the Scarf example confirm the equivalence and compare the prices and uplift of each scheme.

% --- Bibliography placeholder ---
\bibliographystyle{ieeetr}
\bibliography{Cite}

\end{document}